\title{Day convolution for $\infty$-categories}
\author{Saul Glasman}
\newcommand{\angs}[1]{\langle #1 \rangle}
\newcommand{\bb}{\mathbb}
\newcommand{\colim}[1]{\underset{#1}{\text{colim }}}
\newcommand{\del}{\partial}
\newcommand{\D}{\Delta}
\newcommand{\eps}{\epsilon}
\newcommand{\F}{\mathcal{F}}
\newcommand{\Fun}{\text{Fun}}
\newcommand{\Ga}{\Gamma}
\newcommand{\id}{\text{id}}
\newcommand{\inc}{\subseteq}
\newcommand{\inj}{\hookrightarrow}
\newcommand{\iy}{\infty}
\newcommand{\la}{\lambda}
\newcommand{\La}{\Lambda}
\newcommand{\mb}{\mathbf}
\newcommand{\mc}{\mathcal}
\newcommand{\Map}{\text{Map}}
\newcommand{\ol}{\overline}
\newcommand{\op}{\text{op}}
\newcommand{\os}{\overset}
\newcommand{\ot}{\otimes}
\newcommand{\td}{\widetilde}
\newcommand{\toe}{\overset{\sim} \to}
\newcommand{\X}{\times}
\newcommand{\Cat}{\mb{Cat}}
\newcommand{\Top}{\mathbf{Top}}
\theoremstyle{definition}
\newtheorem{dfn}[subsection]{Definition}
\newtheorem{lem}[subsection]{Lemma}
\newtheorem{ntn}[subsection]{Notation}
\newtheorem{obs}[subsection]{Observation}
\newtheorem{prop}[subsection]{Proposition}
\newtheorem{rec}[subsection]{Recollection}
\newtheorem{thm}[subsection]{Theorem}
\begin{document}
\maketitle

\begin{abstract}
Given symmetric monoidal $\infty$-categories $\mb{C}$ and $\mb{D}$, subject to mild hypotheses on $\mb{D}$, we define an $\infty$-categorical analog of the Day convolution symmetric monoidal structure on the functor category $\Fun(\mb{C}, \mb{D})$. An $E_\infty$ monoid for the Day convolution product is a lax monoidal functor from $\mb{C}$ to $\mb{D}$. \end{abstract}

\section{Introduction}

Let $(\mc{C}, \ot_\mc{C})$ and $(\mc{D}, \ot_\mc{D})$ be two symmetric monoidal categories such that $\mc{D}$ admits all colimits. In \cite{Day}, Day equips the functor category $\Fun(\mc{C}, \mc{D})$ with a ``convolution" symmetric monoidal structure: If $F, G : \mc{C} \to \mc{D}$ are functors, then their convolution product $F \ot_\text{Day} G$ is defined as the left Kan extension of $\ot_\mc{D} \circ (F \X G) : \mc{C} \X \mc{C} \to \mc{D}$ along $\ot_\mc{C} : \mc{C} \X \mc{C} \to \mc{C}$. According to \cite[Example 3.2.2]{Day}, the commutative monoids for the convolution product are exactly the lax monoidal functors from $\mc{C}$ to $\mc{D}$.

An important special case of the Day convolution is the tensor product of Mackey functors; see, for example, \cite{PS}. In a recent paper \cite{Bar14}, Barwick develops a theory of higher-categorical Mackey functors in order to study equivariant $K$-theory. To work with such objects, and more generally to study the multiplicative structure of $K$-theory (see \cite{Bar13}) it will be useful to develop a higher-categorical analog of the Day convolution product. This is the purpose of this note, which we accomplish in Section \ref{abawe}. In particular, we show in Proposition \ref{laxmon} that $E_\infty$ algebras for the Day convolution product are lax symmetric monoidal functors - that is, $\infty$-operad maps in the sense of \cite[Definition 2.1.2.7]{HA} - thus answering a question of Blumberg. In section \ref{Fonata}, we construct the Yoneda embedding for a symmetric monoidal category as a symmetric monoidal functor into the presheaf category equipped with the Day convolution product, and deduce that our construction agrees in this special case with the object constructed by Lurie in \cite[Corollary 6.3.1.12]{HA}. 

We'd like to acknoweledge the influence of numerous helpful conversations with Clark Barwick, Denis Nardin, and particularly Jay Shah, who read this paper carefully and discovered substantial errors in an earlier version.

\section{The Day convolution symmetric monoidal $\iy$-category}\label{abawe}

\begin{ntn} Throughout this paper, $\F$ will denote the category of finite pointed sets and pointed maps, or by abuse of notation, the nerve of that category. For convenience, we recall some definitions related to $\F$, all of which are used frequently in \cite{HA}. We write $\angs{n}$ for the object $\{*, 1, \cdots, n\}$ of $\F$. If $S \in \F$ is an object, then $S^o$ denotes the finite set $S \setminus \{\ast\}$. A morphism $f : S \to T$ in $\F$ is called \emph{inert} if it induces a bijection between $f^{-1}(T^o)$ and $T^o$; $f$ is called \emph{active} if it's surjective and $f^{-1}(\ast) = \ast$.
\end{ntn}
 Let $\mb{C}^\ot \to \F$ and $\mb{D}^\ot \to \F$ be symmetric monoidal $\infty$-categories (see \cite[Definition 2.0.0.7]{HA}). To sidestep potential set-theoretic issues, we'll fix a strongly inaccessible uncountable cardinal $\lambda$ and assume that both $\mb{C}^\ot$ and $\mb{D}^\ot$ are $\lambda$-small. If $k : K \to \F$ is a map of simplicial sets, then we denote by $\mb{C}^\ot_k$ the pullback

\[\xymatrix{\mb{C}^\ot_k \ar[r] \ar[d] & \mb{C}^\ot \ar[d] \\
K \ar[r]^k & \F.
}\]
In particular, if $f$ is any morphism in $\F$, $\mb{C}^\ot_f$ is the pullback

\[\xymatrix{\mb{C}^\ot_f \ar[r] \ar[d] & \mb{C}^\ot \ar[d] \\
\D^1 \ar[r]^f & \F.
}\]
and if $S$ is an object of $\F$, $\mb{C}^\ot_S$ is the fiber of $\mb{C}^\ot$ over $S$.

Suppose $k :K \to \F$ is an arbitrary map of simplicial sets. We define a simplicial set 
\[\ol{\Fun(\mb{C}, \mb{D})^\ot}\]
 by the following universal property: there is a bijection, natural in $k$,
\[\Fun_{\F}(K, \ol{\Fun(\mb{C}, \mb{D})^\ot}) \toe \Fun_\F (\mb{C}^\ot_k, \mb{D}^\ot).\]
\begin{obs}
A vertex of $ \ol{\Fun(\mb{C}, \mb{D})^\ot}$ is a finite set $S$ together with a functor $\mb{C}^\ot_S \to  \mb{D}^\ot_S$, which is to say a functor 
\[F_S : \mb{C}^S \to \mb{D}^S.\]
 Similarly, an edge of $ \ol{\Fun(\mb{C}, \mb{D})^\ot})$ is given by a morphism $f : S \to T$ in $\F$ together with a functor 
\[ F_f :  \mb{C}^\ot_f \to \mb{D}^\ot_f\]
over $\D^1$. A section of the structure morphism $\ol{\Fun(\mb{C}, \mb{D})^\ot} \to \F$ corresponds to a map over $\F$ from $\mb{C}^\ot$ to $\mb{D}^\ot$.
\end{obs}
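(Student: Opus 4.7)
The plan is to extract each of the three descriptions by specializing the natural bijection in the definition of $\ol{\Fun(\mb{C}, \mb{D})^\ot}$ to an appropriate choice of $(K,k)$.

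For vertices, take $K = \D^0$ with $k$ picking out an object $S \in \F$. The universal property then identifies a vertex of $\ol{\Fun(\mb{C}, \mb{D})^\ot}$ lying over $S$ with a map $\mb{C}^\ot_S \to \mb{D}^\ot$ over $\F$. Since $\mb{C}^\ot_S$ sits entirely over $\{S\}$, any such map factors through the fiber $\mb{D}^\ot_S$, so the data is the same as a functor $\mb{C}^\ot_S \to \mb{D}^\ot_S$. Invoking the inert-map Segal condition built into the definition of a symmetric monoidal $\infty$-category, which yields equivalences $\mb{C}^\ot_S \simeq \mb{C}^S$ and $\mb{D}^\ot_S \simeq \mb{D}^S$, this rewrites as a functor $F_S : \mb{C}^S \to \mb{D}^S$, as claimed.

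The other two claims follow analogously. For edges, take $K = \D^1$ with $k = f$, so the universal property identifies an edge over $f$ with a map $\mb{C}^\ot_f \to \mb{D}^\ot$ over $\F$, which automatically factors through $\mb{D}^\ot_f$ because both objects live over $\D^1$ (via $f$). For sections, take $K = \F$ with $k = \id_\F$, so that $\mb{C}^\ot_k = \mb{C}^\ot$ and the universal property yields the bijection between sections of $\ol{\Fun(\mb{C}, \mb{D})^\ot} \to \F$ and maps $\mb{C}^\ot \to \mb{D}^\ot$ over $\F$ directly.

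There is essentially no obstacle beyond unwinding the definition; the only thing that demands any thought is that the bijection is natural in $k$, which is precisely what licenses reading off data on individual simplices of $\ol{\Fun(\mb{C}, \mb{D})^\ot}$ rather than on the whole simplicial set at once. The identifications $\mb{C}^\ot_S \simeq \mb{C}^S$ used in the vertex case are not a separate input but the content of the definition of symmetric monoidal $\infty$-category as a cocartesian fibration over $\F$ satisfying the Segal condition for inert morphisms.
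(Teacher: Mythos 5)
Your unwinding is correct and is exactly the intended content of the Observation, which the paper states without proof as an immediate consequence of the defining universal property; specializing the natural bijection to $K = \D^0$, $\D^1$, and $\F$, and using the pullback description of $\mb{D}^\ot_S$ and $\mb{D}^\ot_f$ to obtain the factorizations, is the same (and essentially the only) argument. Your remark that the identification $\mb{C}^\ot_S \simeq \mb{C}^S$ is the Segal condition matches the paper's later explicit appeal to that product decomposition.
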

Suppose $\mb{D}$ has all colimits. We seek to prove that $\ol{\Fun(\mb{C}, \mb{D})^\ot} \to \F$ is a locally cocartesian fibration. Prerequisitely:

\begin{lem}
$\ol{\Fun(\mb{C}, \mb{D})^\ot} \to \F$ is an inner fibration.
\end{lem}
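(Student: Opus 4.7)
The plan is to verify the right lifting property against inner horn inclusions $\La^n_i \inj \D^n$ for $0 < i < n$ by transposing each such lifting problem through the universal property defining $\ol{\Fun(\mb{C}, \mb{D})^\ot}$. Concretely, a lifting problem corresponds to a map $k : \D^n \to \F$ together with a morphism $F : \mb{C}^\ot_{\La^n_i} \to \mb{D}^\ot$ over $\F$, and a solution is a compatible extension $\td F : \mb{C}^\ot_{\D^n} \to \mb{D}^\ot$ of $F$ over $\F$.

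To produce this extension I would argue as follows. Since $\mb{D}^\ot \to \F$ is a cocartesian fibration, and every cocartesian fibration is a categorical (i.e., Joyal) fibration, it is enough to exhibit the inclusion $\mb{C}^\ot_{\La^n_i} \inj \mb{C}^\ot_{\D^n}$ as a trivial cofibration in the Joyal model structure on simplicial sets. Monomorphism-ness is automatic, being the pullback of the monomorphism $\La^n_i \inj \D^n$, so it remains to show the inclusion is a categorical equivalence. This follows from the standard fact that pullbacks of categorical equivalences along cocartesian fibrations are again categorical equivalences, applied to the cocartesian fibration $\mb{C}^\ot \to \F$ and to the inner anodyne (hence categorical equivalence) inclusion $\La^n_i \inj \D^n$, viewed as a map over $\F$ via $k$.

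The main obstacle is essentially citational: the argument combines two standard results about the Joyal model structure---that cocartesian fibrations are categorical fibrations, and that cocartesian fibrations preserve categorical equivalences under pullback---with the universal property, with no new combinatorics required. A more self-contained alternative would filter $\mb{C}^\ot_{\D^n}$ over $\mb{C}^\ot_{\La^n_i}$ by adjoining the non-degenerate simplices whose projections to $\D^n$ are either the top $n$-cell or the face $d_i$, exhibiting the inclusion as a transfinite composition of pushouts of inner horns $\La^k_j \inj \D^k$; the inner fibration property of $\mb{C}^\ot \to \F$ would then guarantee that each attaching horn is \emph{inner}, with the condition $0 < i < n$ being precisely what excludes the outer-horn boundary cases.
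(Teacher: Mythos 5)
Your proposal is correct and follows essentially the same route as the paper: transpose the horn-lifting problem through the universal property, observe via the cocartesian analogue of \cite[Proposition 3.3.1.3]{HTT} that $\mb{C}^\ot_{\La^n_i} \to \mb{C}^\ot_{\D^n}$ is a trivial cofibration in the Joyal model structure, and solve the resulting extension problem against $\mb{D}^\ot \to \F$. The only (cosmetic, and if anything more careful) divergence is that you lift against $\mb{D}^\ot \to \F$ as a categorical fibration, whereas the paper asserts the trivial cofibration is inner anodyne and lifts against it as an inner fibration.
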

\begin{proof}
Suppose we have $0 < i < n$ and a diagram
\[\xymatrix{  \Lambda^n_i \ar[r]^{k_0 \ \ \ \ \ \ } \ar[d] & \ol{\Fun(\mb{C}, \mb{D})^\ot} \ar[d] \\
\D^n \ar[r]_{k_1} & \F. }\]
Giving a lift of this diagram is equivalent to lifting the diagram
\[\xymatrix{\mb{C}^\ot_{k_0} \ar[r] \ar[d] & \mb{D}^\ot \ar[d] \\
\mb{C}^\ot_{k_1} \ar[r] & \F.
}\]
In the statement of \cite[Proposition 3.3.1.3]{HTT}, we can replace ``cartesian" with ``cocartesian" just by taking opposites. We deduce that the cofibration $\mb{C}^\ot_{k_0} \to \mb{C}^\ot_{k_1}$ is a categorical equivalence and therefore inner anodyne, permitting the lift.
\end{proof}
In particular, $\ol{\Fun(\mb{C}, \mb{D})^\ot}$ is a quasicategory. 
\begin{lem} \label{loccoc}
$\ol{\Fun(\mb{C}, \mb{D})^\ot}$ is a locally cocartesian fibration, and a morphism $(f : S \to T, F_f : \mb{C}^\ot_f \to \mb{D}^\ot_f) $ of $\ol{\Fun(\mb{C}, \mb{D})^\ot}$  is locally cocartesian iff the diagram
\[\xymatrix{
\mb{C}^\ot_S \ar[r]^{F_0} \ar[d] & \mb{D}^\ot_f  \ar[d]^{p} \\
\mb{C}^\ot_f \ar[ur]^{F_f} \ar[r] & \D^1}\]
exhibits $F_f$ as a $p$-left Kan extension of $F_0$, where $F_0$ is the composite of $F_S : \mb{C}^\ot_S \to \mb{D}^\ot_S$ with the natural inclusion $\mb{D}^\ot_S \inj \mb{D}^\ot_f$.
\end{lem}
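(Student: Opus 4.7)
The plan is to produce locally cocartesian lifts via the theory of $p$-left Kan extensions and then match universal properties to obtain the characterization. Fix $f : S \to T$ in $\F$ and a vertex $F_S$ over $S$, write $p : \mb{D}^\ot_f \to \D^1$ for the projection, and let $F_0 : \mb{C}^\ot_S \to \mb{D}^\ot_f$ be as in the statement.

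First I would establish existence of the lift by appealing to \cite[Lemma 4.3.2.13]{HA}: a $p$-left Kan extension $F_f$ of $F_0$ along the inclusion $\mb{C}^\ot_S \inj \mb{C}^\ot_f$ exists provided each relevant $p$-colimit exists in $\mb{D}^\ot_f$. Since $p$ is cocartesian and its fiber $\mb{D}^T$ over $1$ is a $\lambda$-small $\infty$-category with all colimits, these $p$-colimits reduce to ordinary colimits over $\lambda$-small diagrams in $\mb{D}^T$ after cocartesian pushforward, and so exist. The resulting $F_f$ then determines an edge of $\ol{\Fun(\mb{C}, \mb{D})^\ot}$ over $f$ via the defining universal property.

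Next I would verify that such an $F_f$ is locally $p$-cocartesian. The universal property of $p$-left Kan extensions (\cite[Proposition 4.3.2.15]{HA}) asserts that for every $G : \mb{C}^\ot_f \to \mb{D}^\ot_f$ over $\D^1$, the restriction map
\[\Map_{\Fun_{/\D^1}(\mb{C}^\ot_f, \mb{D}^\ot_f)}(F_f, G) \to \Map_{\Fun(\mb{C}^\ot_S, \mb{D}^\ot_f)}(F_0, G|_{\mb{C}^\ot_S})\]
is an equivalence. Restricting attention to those $G$ that agree with $F_0$ on $\mb{C}^\ot_S$ and with a prescribed $G_T$ on the fiber over $1$, and unwinding the defining universal property of $\ol{\Fun(\mb{C}, \mb{D})^\ot}$ on the slice simplicial sets that compute mapping spaces in $\ol{\Fun(\mb{C}, \mb{D})^\ot}_f$, translates this equivalence into the statement that composition with $F_f$ induces an equivalence from $\Map_{\Fun(\mb{C}^T, \mb{D}^T)}(F_f|_T, G_T)$ to the mapping space from $F_S$ to $G_T$ in $\ol{\Fun(\mb{C}, \mb{D})^\ot}$ lying over $f$. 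This is precisely the characterization of $F_f$ as a locally $p$-cocartesian edge; the converse (any locally cocartesian edge is a $p$-left Kan extension of $F_0$) follows from the uniqueness up to equivalence of locally cocartesian lifts.

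The main obstacle is the final translation --- matching the slice constructions appearing in the universal property of $p$-left Kan extensions against the mapping-space characterization of locally cocartesian edges via the defining universal property of $\ol{\Fun(\mb{C}, \mb{D})^\ot}$. This is formal once the dictionary between sections of $\ol{\Fun(\mb{C}, \mb{D})^\ot}$ over $\D^n$-shaped bases and functors of the corresponding pullbacks of $\mb{C}^\ot$ and $\mb{D}^\ot$ has been fixed, but requires careful bookkeeping.
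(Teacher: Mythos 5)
Your proposal is correct and follows essentially the same route as the paper: existence of the relative left Kan extensions via \cite[Lemma 4.3.2.13]{HTT} together with the reduction of $p$-colimits over $\D^1$ to ordinary colimits in the fiber $\mb{D}^\ot_T$ after cocartesian pushforward, local cocartesianness of the resulting edge from the universal property of relative left Kan extensions, and the converse from uniqueness of locally cocartesian lifts up to equivalence. The only (cosmetic) difference is that the paper verifies the left-horn-lifting criterion directly using \cite[Lemma 4.3.2.12]{HTT}, whereas you pass through the mapping-space formulation of the same universal property.
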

\begin{proof}
Before trying too hard to prove this, it seems prudent to verify the following: 

\begin{lem}
The relative left Kan extensions arising in the statement of Lemma \ref{loccoc} actually exist.
\end{lem}
\begin{proof}
Applying \cite[Lemma 4.3.2.13]{HTT}, we must show that for each object $X \in \mb{C}^\ot_T$, the functor
\[(\mb{C}^\ot_S)_{/X} \to \mb{D}^\ot_f\]
admits a $p$-colimit. The proof of this is almost identical to that of \cite[Corollary 4.3.1.11]{HTT}; we simply replace the sentence ``Assumption (2) and Proposition 4.3.1.10 guarantee that $\ol{q}'$ is also a $p$-colimit diagram when regarded as a map from $K^\rhd$ to $X$" with ``The condition of Proposition 4.3.1.10 is vacuously satisfied for $\ol{q}'$, since there are no nonidentity edges with source $\{1\}$ in $\D^1$". 
\end{proof}
Let $\eps$ be the map $\La^n_0 \to \D^1$ which maps $0$ to $0$ and all other vertices to $1$. A map $\La^n_0 \to \ol{\Fun(\mb{C}, \mb{D})^\ot}$ lifting $\eps$ whose leftmost edge is $F_f$ gives a diagram
\[\xymatrix{
\del \D^{n-1} \ar[d] \ar[r] & \Fun_{\D^1}(\mb{C}^\ot_f, \mb{D}^\ot_f) \ar[d] \\
\D^{n -1} \ar[r] & \Fun(\mb{C}^\ot_S, \mb{D}^\ot_S)}\]
where the bottom horizontal map is the constant map at $F_0$. By \cite[Lemma 4.3.2.12]{HTT}, this diagram admits a lift. Since lifting left horn inclusions that factor through $\epsilon$ is sufficient to show that an edge is locally cocartesian, $\phi$ is locally cocartesian.

Thus we have shown that $\ol{\Fun(\mb{C}, \mb{D})^\ot}$ is a locally cocartesian fibration, and for each morphism $f$ of $\F$, we can choose a locally cocartesian edge $s$ over $f$ which corresponds to a relative left Kan extension as in Lemma \ref{loccoc}. Suppose $s'$ is another locally cocartesian edge over $f$ with the same source as $s$. Then $s$ and $s'$ are equivalent as edges of $\ol{\Fun(\mb{C}, \mb{D})^\ot}$, and so they must correspond to equivalent functors $\mb{C}^\ot_f \to \mb{D}^\ot_f$. Since one of these is relatively left Kan extended from $\mb{C}^\ot_S$, so must the other be. This proves the converse.
\end{proof}

What we've said so far makes sense for an arbitrary pair of cocartesian fibrations over an arbitrary base. Our construction gives rise to a locally cocartesian fibration in this generality, but there's no reason to expect it to be cocartesian. In fact, this won't be the case until we've cut $\ol{\Fun(\mb{C}, \mb{D})^\ot}$ down to an object which behaves sensibly with respect to the product decompositions occuring in $\mb{C}$, and then not without an additional condition on $\mb{D}^\ot$. 
\begin{rec}
For $\mb{C}^\ot$ a symmetric monoidal category, there is a canonical product decomposition 
\[\mb{C}^\ot_S \cong \mb{C}^S,\] 
given by the inert maps, where by $\mb{C}^S$, we mean the cartesian power indexed by the set $S^o$ of non-basepoint elements of $S$. In fact, more is true. Let $\F_{/S}^{act}$ be the full subcategory of $\F_{/S}$ spanned by the active morphisms. Then for any $S \in \F$, the obvious product decomposition
\[\F_{/S} \simeq \left(\prod_{s \in S^o} \F_{/\{s\}_+}^{act}\right) \X \F\]
given by taking preimages of each point of $S^o$ and the basepoint underlies a product decomposition
\[\mb{C}^\ot \X_\F \F_{/S} \simeq \left( \prod_{s \in S^o} \mb{C}^\ot \X_\F \F_{/\{s\}_+}^{act} \right) \X \mb{C}^\ot.\] 
In particular, for any morphism $f : S \to T$ in $\F$, there is a canonical fiber product decomposition
\[\mb{C}^\ot_f \cong \left( \prod_{\D^1,\, t \in T^o} \mb{C}^\ot_{\mu_{f^{-1}(t)_+}} \right) \X \mb{C}^\ot_{\beta_{f^{-1}(*)}},\]
where for a finite pointed set $V$, $\mu_V$ denotes the active map $V \to \angs{1}$ if $V$ is nonempty and the inclusion $\ast \inj \angs{1}$ if $V$ is empty, and $\beta_V$ denotes the unique map $V \to \ast$. This decomposition is compatible with the decompositions of $\mb{C}^\ot_S$ and $\mb{C}^\ot_T$.
\end{rec}

\begin{lem} \label{ghytna}
Let $\mb{D}^\ot$ be a symmetric monoidal $\iy$-category whose underlying category $\mb{D}$ admits all colimits. The following conditions on $\mb{D}^\ot$ are equivalent:
\begin{enumerate}[(i)]
\item The tensor product on $\mb{D}$ preserves colimits separately in each variable. That is, for each object $X \in \mb{D}$, the composite
\[\mb{D} \os{(X, -)} \to \mb{D} \X \mb{D} \os{\mu} \to \mb{D}\]
is a colimit-preserving functor.
\item Let $f : S \to T$ in $\F$ be a morphism, $(K_s)_{s \in S^o}$ an $S^o$-tuple of simplicial sets, and for each $s$, let $\phi_s : K_s \to \mb{D}$ be a functor. Let 
\[K = \prod_s K_s,\]
and using the product decomposition of $\mb{D}^\ot_S$, let 
\[\phi : K \to \mb{D}^\ot_S\]
be the product of the $\phi_s$. Suppose
\[\phi^\rhd : K^\rhd \to \mb{D}_S^\ot\]
is such that for each $s \in S^o$ and for each $y \in \prod_{s' \neq s} K_{s'}$, the composite
\[K_s^\rhd \os{(-, y)} \to K^\rhd \os{\phi^\rhd} \to \mb{D}_S^\ot \os{\pi_s} \to \mb{D}\]
is a colimit diagram, where $\pi_s$ is projection onto the $s$th factor. Then the cocartesian pushforward $f_*(\phi^\rhd)$ has the same property: for each $t \in T^o$ and for each $z \in \prod_{f(s) \neq t}$, the composite
\[\left( \prod_{f(s) = t} K_s \right)^\rhd \os{(-, z)} \to K^\rhd \os{f_*(\phi^\rhd)} \to \mb{D}_T^\ot \os{\pi_t} \to \mb{D}\]
is a colimit diagram.
\end{enumerate}
\end{lem}

\begin{proof}
One implication is obvious: letting $f$ be the active map $\angs{2} \to \angs{1}$ and $K_1 = \ast$ deduces (i) from (ii).

For the reverse impliciation, we may assume without loss of generality that $f$ is active and its target is $\angs{1}$. By factorizing $f$ as a composite of active maps for which the cardinality of each preimage is at most 2, we can reduce to the case where $f$ is the active morphism from $\angs{2}$ to $\angs{1}$. Now observe that $f_*$ preserves the left Kan extension along the projection $p: K \to K_2$, since each colimit arising in the Kan extension is under the aegis of (i). But this reduces to the case of (i), completing the proof.
\end{proof}

We'll assume that $\mb{D}$ satisfies these equivalent conditions for the remainder of the paper. With this in hand, we can make our main definition.

\begin{dfn}
Suppose $\mb{D}^\ot$ is such that the tensor product preserves colimits in each variable separately. The \emph{Day convolution symmetric monoidal $\infty$-category} $\Fun(\mb{C}, \mb{D})^\ot$ is the largest simplicial subset of $\ol{\Fun(\mb{C}, \mb{D})^\ot}$ whose vertices over $S \in \F$ are those corresponding to functors $F: \mb{C}^\ot_S \to \mb{D}^\ot_S$ which are in the essential image of the natural inclusion
\[\iota_S: \Fun(\mb{C}, \mb{D})^S \to \Fun(\mb{C}^S, \mb{D}^S) \cong \Fun(\mb{C}^\ot_S, \mb{D}^\ot_S)\]
and all of whose edges over $f : S \to T \in \F$ correspond to functors $F : \mb{C}^\ot_f \to \mb{D}^\ot_f$ which are in the essential image of the natural inclusion
\begin{align*} \iota_f : \prod_{t \in T^o} \Fun_{\D^1}(\mb{C}^\ot_{\mu_{f^{-1}(t)_+}}, \mb{D}^\ot_{\mu_{f^{-1}(t)_+}}) \X \Fun_{\D^1}(\mb{C}^\ot_{\beta_{f^{-1}(*)}}, \mb{D}^\ot_{\beta_{f^{-1}(*)}}) \\ \to \Fun_{\D^1}(\mb{C}^\ot_f, \mb{D}^\ot_f).\end{align*}
\end{dfn}

The fiber of this category over $S \in \F$ is evidently $\Fun(\mb{C}, \mb{D})^S$, so the symmetric monoidal $\infty$-category stakes are looking favorable. We need to verify a couple of things.

\begin{lem}
The natural projection $p : \Fun(\mb{C}, \mb{D}) \to \F$ is an inner fibration. Thus $\Fun(\mb{C}, \mb{D})^\ot$ is a subcategory of $\ol{\Fun(\mb{C}, \mb{D})^\ot}$.
\end{lem}
\begin{proof}
We only need to verify that the inner horn inclusion $\La^2_1 \to \D^2$ can be lifted against $p$. Let $f : S \to T$ and $g: T \to U$ be morphisms in $\F$, and let $\nu : \La^2_1 \to \F$ and $\rho: \D^2 \to \F$ be the corresponding maps. Let $\ol{\nu} : \La^2_1 \to \Fun(\mb{C}, \mb{D})^\ot$ be a lift of $\nu$; $\ol{\nu}$ is the data of a map
\[\kappa : \mb{C}^\ot_\nu \to \mb{D}^\ot_\nu\]
over $\La_1^2$. But the conditions satisfied by $\kappa$ imply that it decomposes, up to equivalence, as a product of maps
\[\kappa = \prod_{\La^2_1, \, u \in U} \kappa_u : \mb{C}^\ot_{\nu_u} \to \mb{D}^\ot_{\nu_u}\]
where $\nu_u : \La^2_1 \to \F$ is specified by the morphisms 
\[f_u : (gf)^{-1}(u) \to g^{-1}(u), \, g_u : g^{-1}(u) \to \{u\}_+\]
if $u \neq *$, and
\[f_u : (gf)^{-1}(u) \to g^{-1}(u), \, g_u : g^{-1}(u) \to *\]
if $u = *$.

Each $\kappa_u$ corresponds to a diagram
\[\begin{tikzcd}
\La^2_1 \ar{r}{\ol{\nu}_u} \ar{d} & \Fun(\mb{C}, \mb{D})^\ot \ar{d} \ar{r} & \ol{\Fun(\mb{C}, \mb{D})^\ot} \ar{dl} \\
\D^2 \ar{r}{\rho_u} & \F.
\end{tikzcd} \]
We can lift this diagram to get a functor 
\[\ol{\rho_u} : \D^2 \to \ol{\Fun(\mb{C}, \mb{D})^\ot}\]
which trivially factors through $\Fun(\mb{C}, \mb{D})^\ot$. This corresponds to a functor
\[\la_u : \mb{C}^\ot_{\rho_u} \to \mb{D}^\ot_{\rho_u}\]
over $\D^2$. Taking the product
\[\la = \prod_{\D^2, \, u \in U} \la_u\]
gives a solution to the original lifting problem.
\end{proof}

\begin{lem}\label{daycocart}
$p : \Fun(\mb{C}, \mb{D})^\ot \to \F$ is a cocartesian fibration.
\end{lem}

\begin{proof}
First we identify the locally cocartesian edges of $p$. Let $f : S \to T$ be a morphism in $\F$. If $T = \ast$, or if $|T^o| = 1$ and $f$ is active, then 
\[\Fun(\mb{C}, \mb{D})^\ot_f = (\ol{\Fun(\mb{C}, \mb{D})^\ot})_f\]
 and the locally cocartesian edges are $p$-left Kan extensions as before. Otherwise, a locally cocartesian edge is, up to equivalence, a product of these $p$-left Kan extensions along the product decompositions of $\mb{C}^\ot_f$ and $\mb{D}^\ot_f$.

It remains to show that the composition of locally cocartesian edges is locally cocartesian. Let $f: S \to T$, $g: T \to U$ be morphisms in $\F$; by the product decomposition of $\mb{C}^\ot_{\D^2}$, we may assume that $U = \angs{1}$, that $g$ is active, and that $f^{-1}(\ast) = \{\ast\}$. Let $(F_s)_{s \in S^o}$ be an $S^o$-tuple of functors $\mb{C} \to \mb{D}$, and let $X \in \mb{C}$ be an object. Then we must show that the canonical map
\[\colim{(Y_s) \in (\mb{C}^\ot_S)_{/X}} (gf)_*(F_s(Y_s)) \to \colim{(Z_t) \in (\mb{C}^\ot_T)_{/X}} g_*\left(\colim{(W_s) \in \left(\mb{C}^\ot_{f^{-1}(t)}\right)_{/Z_t}} f_*(F_s(W_s))\right)_t \]
is an equivalence. But the hypothesis that the tensor product on $\mb{D}$ preserves colimits in each variable separately precisely implies, by Lemma \ref{ghytna}, that
\begin{align*}
\colim{(Z_t) \in (\mb{C}^\ot_T)_{/X}} \left(\colim{(W_s) \in \left(\mb{C}^\ot_{f^{-1}(t)}\right)_{/Z_t}} (gf)_*(F_s(W_s))\right)_t \simeq \\
\colim{(Z_t) \in (\mb{C}^\ot_T)_{/X}} g_*\left(\colim{(W_s) \in \left(\mb{C}^\ot_{f^{-1}(t)}\right)_{/Z_t}} f_*(F_s(W_s))\right)_t \end{align*}
which is the same thing.
\end{proof} 

\begin{prop}
$\Fun(\mb{C}, \mb{D})^\ot \to \F$ is a symmetric monoidal $\infty$-category.
\end{prop}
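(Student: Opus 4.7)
The plan is to invoke \cite[Definition 2.0.0.7]{HA}: a symmetric monoidal $\infty$-category is precisely a cocartesian fibration over $\F$ satisfying the Segal condition --- for every $\angs{n} \in \F$, the inert projections $\rho^i : \angs{n} \to \angs{1}$ with $1 \leq i \leq n$ induce an equivalence $\Fun(\mb{C}, \mb{D})^\ot_{\angs{n}} \os{\sim}{\to} \prod_{i=1}^n \Fun(\mb{C}, \mb{D})^\ot_{\angs{1}}$. The cocartesian fibration half of this assertion is exactly the preceding lemma, so only the Segal condition remains to be checked.

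By construction the fiber of $\Fun(\mb{C}, \mb{D})^\ot$ over $\angs{n}$ is $\Fun(\mb{C}, \mb{D})^n$, which already agrees on the nose with the product of $n$ copies of the fiber over $\angs{1}$; the Segal map is therefore a self-map of $\Fun(\mb{C}, \mb{D})^n$, and it suffices to show that cocartesian pushforward along each $\rho^i$ sends $(F_1, \ldots, F_n)$ to $F_i$. I would establish this by specializing the explicit pushforward description from the proof of the preceding lemma. For $f = \rho^i$, the target $\angs{1}^o$ has a unique element $1$ with $f^{-1}(1) = \{i\}$, so the decomposition $\mb{C}^\ot_f \cong \prod_{t \in \angs{1}^o} \mb{C}^\ot_{f,t}$ collapses to the single factor $\mb{C}^\ot_{\rho^i, 1} \to \D^1$ classified by the identity $\mb{C} \to \mb{C}$, and hence equivalent to $\mb{C} \X \D^1$. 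The $p$-left Kan extension of $F_i : \mb{C} \to \mb{D}$ along the source inclusion $\mb{C} \inj \mb{C} \X \D^1$ is then, using the terminality of $\id_X$ in $\mb{C}_{/X}$ together with the triviality of the ambient pushforward on $\mb{D}^\ot_{\rho^i, 1}$, just $F_i$ itself on the target fiber. So the pushforward is $F_i$ as desired, the Segal map is equivalent to the identity, and the proof concludes.

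No serious obstacle arises here: essentially all substance sits in the cocartesian fibration property already proved, and the Segal condition is effectively automatic because inert morphisms induce only trivial Kan extensions.
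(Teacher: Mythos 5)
Your proposal is correct and follows essentially the same route as the paper: both verify the Segal condition by specializing the product decomposition $\mb{C}^\ot_f \cong \prod_{t\in T^o}\mb{C}^\ot_{f,t}$ from the preceding lemma to an inert $f$, observing that each factor becomes $\mb{C}\X\D^1$ so that the relative left Kan extension is trivially $F_t\X\D^1$ and the pushforward is the projection. The only cosmetic difference is that you specialize immediately to the maps $\rho^i:\angs{n}\to\angs{1}$ while the paper treats a general inert $f:S\to T$; the content is identical.
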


\begin{proof}
We must verify the Segal condition: for each $n \in \bb{N}$, the pushforwards associated to the $n$ inert morphisms $\angs{n} \to \angs{1}$ exhibit $\mc{X}_{\angs{n}}$ as the product of $n$ copies of $\mc{X}_{\angs{1}}$. If, as in our case, one already has an identification of $\mc{X}_{\angs{n}}$ with $\mc{X}_{\angs{1}}^n$ up one's sleeve, then another way to express this condition is to say that the pushforward
\[i_j : \mc{X}_{\angs{1}}^n \to \mc{X}_{\angs{1}}\]
associated to the inert map $\chi_j : \angs{n} \to \angs{1}$ that picks out $j$ is equivalent to projection onto the $j$th factor. In this case our usual product decomposition takes the form
\[\mb{C}^\ot_{\chi_j} \simeq (\mb{C} \X \D^1) \X_{\D^1} \prod_{\D^1, \,i \neq j} (\mb{C}^\rhd)\]
and the conclusion follows immediately from the characterization of the locally cocartesian arrows in Lemma \ref{daycocart}. \end{proof}

\begin{prop} \label{laxmon}
A commutative monoid in $\Fun(\mb{C}, \mb{D})^\ot$ is exactly a lax symmetric monoidal functor from $\mb{C}^\ot$ to $\mb{D}^\ot$. \end{prop}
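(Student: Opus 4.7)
By the defining universal property of $\ol{\Fun(\mb{C}, \mb{D})^\ot}$, sections $s : \F \to \ol{\Fun(\mb{C}, \mb{D})^\ot}$ correspond bijectively to maps $\phi : \mb{C}^\ot \to \mb{D}^\ot$ over $\F$. The plan is to show, under this correspondence, that a section lands in $\Fun(\mb{C}, \mb{D})^\ot$ and carries inert morphisms to cocartesian edges if and only if the corresponding $\phi$ preserves cocartesian lifts of inert morphisms in $\F$---that is, $\phi$ is an $\iy$-operad map in the sense of \cite[Definition 2.1.2.7]{HA}. Since every inert morphism of $\F$ factors, up to isomorphism, as a composite of elementary projections $\rho_s : S \to \angs{1}$, the Segal condition allows us to test cocartesian-preservation only on the $\rho_s$; I will write $G := \phi|_\mb{C}$ for the restriction of $\phi$ to the one-element fiber.

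For the direction ``lax implies commutative monoid,'' assume $\phi$ preserves $\rho_s$-cocartesian edges. Given $X = (X_t)_{t \in S^o} \in \mb{C}^\ot_S$, applying $\phi$ to the cocartesian edge $X \to X_s$ yields a cocartesian edge $\phi_S(X) \to G(X_s)$ in $\mb{D}^\ot$; since the $\rho_s$-pushforward in $\mb{D}^\ot$ is projection onto the $s$-th factor of $\mb{D}^\ot_S \cong \mb{D}^S$, we deduce $\phi_S(X)_s \simeq G(X_s)$. Naturality in $X$ yields $\phi_S \simeq \prod_{s \in S^o} G$, so $\phi_S$ lies in the essential image of $\iota_S$ and $s_\phi$ factors through $\Fun(\mb{C}, \mb{D})^\ot$. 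For any inert $f : S \to T$, Lemma \ref{loccoc} reduces ``$s_\phi(f)$ is cocartesian'' to the assertion that $\phi|_{\mb{C}^\ot_f}$ is a $p$-left Kan extension of $\phi_S$ pushed into $\mb{D}^\ot_f$. Invoking the product decomposition $\mb{C}^\ot_f \cong \prod_t \mb{C}^\ot_{f, t}$ introduced in the proof that $\Fun(\mb{C}, \mb{D})^\ot \to \F$ is a cocartesian fibration, together with the compatible decomposition of $\phi|_{\mb{C}^\ot_f}$ that laxness provides, the problem reduces to each factor: for $t \in T^o$ with $f^{-1}(t) = \{s_t\}$, the factor is the constant $\D^1$-extension $G \times \id_{\D^1}$, which is a $p$-left Kan extension because each $\mb{C}_{/Z}$ has a terminal object $\id_Z$; for the basepoint component, the target fiber over $1$ is terminal, so the Kan extension condition is vacuous.

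Conversely, if $s_\phi$ is a commutative monoid in $\Fun(\mb{C}, \mb{D})^\ot$, membership in the full subcategory gives each $\phi_S \simeq \prod_s G_s^{(S)}$ for some $G_s^{(S)} : \mb{C} \to \mb{D}$, and the explicit description of the inert cocartesian pushforward in $\Fun(\mb{C}, \mb{D})^\ot$ (obtained in the proof that $\Fun(\mb{C}, \mb{D})^\ot \to \F$ is symmetric monoidal) forces $G_s^{(S)} \simeq G$. Consequently $\phi_S(X)_s \simeq G(X_s)$, so $\phi$ sends each $\rho_s$-cocartesian edge of $\mb{C}^\ot$ to a cocartesian edge of $\mb{D}^\ot$; by the Segal reduction, $\phi$ is lax. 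The main obstacle is the forward direction's verification of the $p$-left Kan extension property for general inert $f$: the product-decomposition reduction is essential, since without it the relevant $p$-colimit over $(\mb{C}^\ot_S)_{/Y} \cong \mb{C}^{f^{-1}(\mathrm{bp})} \times \prod_t \mb{C}_{/Y_t}$ in $\mb{D}^\ot_f$ would resist a clean direct computation.
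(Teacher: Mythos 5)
Your overall architecture matches the paper's: identify a commutative monoid with a section of $\Fun(\mb{C}, \mb{D})^\ot \to \F$, hence with a map $\phi : \mb{C}^\ot \to \mb{D}^\ot$ over $\F$, and compare ``the section sends inerts to cocartesian edges'' with ``$\phi$ preserves cocartesian edges over inerts'' via Lemma \ref{loccoc} and the product decomposition of $\mb{C}^\ot_f$ for inert $f$. The reduction to the elementary inerts $\rho_s$ and the observation that the section must land in the full subcategory are both fine (the latter is actually more careful than the paper). But both of your directions stop short at the one point where the real content lies, namely identifying $\phi|_{\mb{C}^\ot_f}$ with the standard extension $G \times \D^1$ \emph{as a functor over $\D^1$}, not merely fiberwise.

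The serious gap is in your converse. From ``$\phi_S(X)_s \simeq G(X_s)$'' you conclude ``so $\phi$ sends each $\rho_s$-cocartesian edge to a cocartesian edge.'' This is a non sequitur: an edge of $\mb{D}^\ot$ over $\rho_s$ from $\phi_S(X)$ to $G(X_s)$ is cocartesian iff the map $(\phi_S(X))_s \to G(X_s)$ that it induces is an equivalence, and an abstract equivalence of the two objects says nothing about the map induced by $\phi$ on that particular edge. (A functor $\mb{C} \X \D^1 \to \mb{D} \X \D^1$ over $\D^1$ restricting to $G$ on both fibers can still twist the edges by a non-invertible natural endomorphism of $G$.) What closes this gap in the paper is the uniqueness of relative left Kan extensions: since $s_\phi(\rho_s)$ is cocartesian, Lemma \ref{loccoc} says $\phi|_{\mb{C}^\ot_{\rho_s}}$ is a $p$-left Kan extension of $\phi_S$; the functor $G \X \D^1$ (on the relevant factor) is exhibited as one such extension; hence $\phi|_{\mb{C}^\ot_{\rho_s}}$ is equivalent \emph{over $\D^1$} to a functor that visibly preserves cocartesian edges. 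You have all the ingredients for this (you even verify that $G \X \id_{\D^1}$ is a relative Kan extension using the terminal object of $\mb{C}_{/Z}$) but deploy them only in the forward direction. Symmetrically, in the forward direction the phrase ``the compatible decomposition of $\phi|_{\mb{C}^\ot_f}$ that laxness provides'' assumes that a cocartesian-edge-preserving extension of $\phi_S$ over $\D^1$ is determined up to homotopy by $\phi_S$ and therefore agrees factorwise with $G \X \id_{\D^1}$; that is exactly the content of the paper's marked-anodyne argument (the inclusion $\mb{C}^\flat \to (\mb{C} \X \D^1)^\natural$ is a cocartesian equivalence in $\text{sSet}^+_{\D^1}$), which you need to cite or reprove rather than fold into the word ``compatible.''
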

\begin{proof}
Recall that by definition, a lax symmetric monoidal functor from $\mb{C}^\ot$ to $\mb{D}^\ot$ is a morphism of $\iy$-operads from $\mb{C}^\ot$ to $\mb{D}^\ot$; that is, it is a morphism of categories over $\F$ that preserves cocartesian edges over inert morphisms. What we must prove is that that if $s : \F \to \Fun(\mb{C}, \mb{D})^\ot$ is a commutative monoid - that is, a section of the structure map which takes inert morphisms to cocartesian edges - the corresponding functor
\[\mb{C}^\ot \X_\F \F \simeq \mb{C}^\ot \to \mb{D}^\ot\]
preserves cocartesian edges over inert morphisms. So if $f$ is an inert map in $\F$, we must show that a functor $\mb{C}^\ot_f \to \mb{D}^\ot_f$ which corresponds to a cocartesian edge of $\Fun(\mb{C}, \mb{D}^\ot)$ preserves cocartesian edges over $f$.

By using the product decomposition of $\mb{C}^\ot_f$ for $f$ inert, we reduce to the following:
let $G : \mb{C} \to \mb{D}$ be a functor, and let $F_0$ be the composite
\[\mb{C} \os{G} \to \mb{D} \os{i_0} \to \mb{D} \X \D^1.\]
Then we must prove a functor $F : \mb{C} \X \D^1 \to \mb{D} \X \D^1$ is a left Kan extension of $F_0$ (relative to $\D^1$) iff it preserves cocartesian edges. Since $G \X \D^1$ is a left Kan extension of $F_0$, the former condition merely states that $F$ is equivalent to $G \X \D^1$, and $G \X \D^1$ clearly preserves cocartesian edges. To prove the converse, we find it most convenient to deploy some machinery. By the opposite of \cite[Proposition 3.1.2.3]{HTT} applied to the (opposite) marked anodyne map $\D^0 \to (\D^1)^\sharp$ and the cofibration $\emptyset \to \mb{C}^\flat$, the inclusion of marked simplicial sets
\[\mb{C}^\flat \to (\mb{C} \X \D^1)^\natural\]
is opposite marked anodyne and therefore a cocartesian equivalence in $\text{sSet}^+_{\D^1}$. This means that $F_0$ extends homotopy uniquely to a map of marked simplicial sets
\[(\mb{C} \X \D^1)^\natural \to (\mb{D} \X \D^1)^\natural\]
which proves the result.
\end{proof}

\begin{lem} \label{cpayan}
For any $\mb{C}^\ot$ and any $\mb{D}^\ot$ for which the tensor product commutes with colimits in each variable separately, $\Fun(\mb{C}, \mb{D})^\ot$ also has the property that the tensor product commutes with each variable separately.
\end{lem}
\begin{proof}
Suppose $\phi : K \to \Fun(\mb{C}, \mb{D})$ is a diagram and $F: \mb{C} \to \mb{D}$ is a functor. $F$ and $\phi$ define a functor $\psi : K \to \Fun(\mb{C}, \mb{D})^2 = \Fun(\mb{C}, \mb{D})^\ot_{\angs{2}}$, and we aim to show that the natural morphism
\[G:  \colim{K} (\mu_* \psi) \to \mu_*(\colim{K} \psi) \]
is an equivalence, where $\mu : \angs{2} \to \angs{1}$ is the active morphism. Evaluating each side on the object $X \in \mb{C}$ specializes $G$ to
\begin{align*} G_X : \colim{k \in K}  \colim{(Y, Z) \in \mb{C}^2 \X_\mb{C} \mb{C}_{/X}} (F(Y) \ot \phi(k)(Z)) \\ \to \colim{(Y, Z) \in \mb{C}^2 \X_\mb{C} \mb{C}_{/X}}(F(Y) \ot \colim{k \in K} \phi(k)(Z)).\end{align*}
By the hypothesis on $\mb{D}$, we can pull out the colimit over $K$ on the right and conclude that $G_X$ is an equivalence.
\end{proof}

In \cite[Corollary 6.3.1.12]{HA}, Lurie describes a Day convolution symmetric monoidal structure on the category of presheaves on a small symmetric monoidal category, which for consistency ought to agree with our construction in the relevant case.

\begin{prop}
Let $\mb{C}$ be a symmetric monoidal category and let $\mc{P}(\mb{C})^\ot$ denote the construction of \cite[6.3.1.12]{HA}. Endow $\Top$ with its product symmetric monoidal structure. Then there is a model for the symmetric monoidal category $(\mb{C}^\op)^\ot$ such that there is an equivalence of symmetric monoidal categories
\[\mc{P}(\mb{C})^\ot \cong \Fun(\mb{C}^\op, \Top)^\ot.\]
\end{prop}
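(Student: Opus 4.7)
The plan is to identify both symmetric monoidal $\iy$-categories via a common universal property: each is freely generated by $\mb{C}$ under colimits which distribute over the tensor product separately in each variable. By \cite[Proposition 4.8.1.10]{HA} (which underlies \cite[Corollary 6.3.1.12]{HA}), Lurie's $\mc{P}(\mb{C})^\ot$ is characterized up to equivalence by being a presentable symmetric monoidal $\iy$-category whose tensor product preserves colimits in each variable separately, equipped with a symmetric monoidal Yoneda-like functor $j : \mb{C} \to \mc{P}(\mb{C})^\ot$, such that for any presentable symmetric monoidal target $\mb{E}^\ot$ enjoying the analogous distributivity, restriction along $j$ induces an equivalence between the $\iy$-category of symmetric monoidal colimit-preserving functors out of $\mc{P}(\mb{C})^\ot$ and that of all symmetric monoidal functors out of $\mb{C}$.

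First I would invoke the symmetric monoidal Yoneda embedding
\[y : \mb{C} \to \Fun(\mb{C}^\op, \Top)^\ot\]
to be constructed in Section \ref{Fonata}; the clause ``a model for $(\mb{C}^\op)^\ot$'' in the statement simply refers to whichever presentation of the opposite symmetric monoidal structure is produced by that construction. Next, by Lemma \ref{cpayan}, $\Fun(\mb{C}^\op, \Top)^\ot$ inherits from $\Top$ (with its cartesian structure) the property that tensoring commutes with colimits in each variable separately, and it is clearly presentable. Applying the universal property recalled above to $y$ then produces an essentially unique symmetric monoidal colimit-preserving functor
\[\Phi : \mc{P}(\mb{C})^\ot \to \Fun(\mb{C}^\op, \Top)^\ot\]
extending $y$ along $j$.

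Finally, to verify that $\Phi$ is an equivalence, it suffices to check that its restriction to fibers over $\angs{1}$ is an equivalence, since both sides are symmetric monoidal $\iy$-categories and their fibers over any $\angs{n}$ are accordingly the $n$-fold products of the underlying $\iy$-categories. On underlying $\iy$-categories, $\Phi$ is a colimit-preserving endofunctor of $\mc{P}(\mb{C}) = \Fun(\mb{C}^\op, \Top)$ whose restriction along the ordinary Yoneda embedding is again the ordinary Yoneda embedding, so by \cite[Theorem 5.1.5.6]{HTT} it is an equivalence. The main obstacle is really the construction of the symmetric monoidal Yoneda embedding $y$, which is the technical content of Section \ref{Fonata}; once that tool is in hand, the identification of the two symmetric monoidal structures is a formal consequence of matched universal properties, and a secondary bookkeeping issue is only to verify that the opposite symmetric monoidal structure used on $\mb{C}$ is the same in both constructions, which is handled by choosing the same model throughout.
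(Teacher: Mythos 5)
Your proposal is correct and follows essentially the same route as the paper: both reduce the statement to the uniqueness clause of Lurie's characterization of $\mc{P}(\mb{C})^\ot$, verifying the distributivity criterion via Lemma \ref{cpayan} and deferring the construction of the symmetric monoidal Yoneda embedding to Section \ref{Fonata}. Your explicit construction of the comparison functor $\Phi$ and the verification via \cite[Theorem 5.1.5.6]{HTT} merely unpacks what the paper leaves implicit in saying that one need only check the two criteria of \cite[Corollary 6.3.1.12]{HA}.
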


In order to prove this, one only needs to show that $\Fun(\mb{C}^\op, \Top)^\ot$ satisfies the two criteria of \cite[Corollary 6.3.1.12]{HA}. Criterion (2) follows immediately from Lemma \ref{cpayan}, and Criterion (1) will be the subject of the next section.

\section{The symmetric monoidal Yoneda embedding}\label{Fonata}

A functor
\[\mb{C}^\ot \to \Fun(\mb{C}^\op, \Top)^\ot.\]
is the same thing as a functor
\[\mb{C}^\ot \X_{\F_*} (\mb{C}^\op)^\ot \to \Top^\X\]
satisfying certain conditions; this in turn is the same as a functor
\[\mb{C}^\ot \X_{\F_*} (\mb{C}^\op)^\ot \X_{\F_*} \Gamma^\X \to \Top\]
satisfying additional conditions, where $\Gamma^\X$ is the category of \cite[Notation 2.4.1.2]{HA} (see also \cite[Construction 2.4.1.4]{HA}).
Constructing this functor is going to require a small dose of extra technology. In Proposition \ref{(-)_*}, we'll describe a construction, due to Denis Nardin, of a strongly functorial pushforward for cocartesian fibrations. 

\begin{prop} \label{(-)_*}
Let $p: \mb{X} \to \mb{B}$ be any cocartesian fibration of $\iy$-categories. Let $\mc{O}_\mb{B}$ be the arrow category $\Fun(\D^1, \mb{B})$, equipped with its source and target maps
\[s, t: \mc{O}_\mb{B} \to \mb{B},\]
and form the pullback
\[\mb{X} \X_\mb{B} \mc{O}_\mb{B}\]
via the source map. Then there is a functor 
\[(-)_* : \mb{X} \X_\mb{B} \mc{O}_\mb{B} \to \mb{X}\]
which maps the object $(x, f)$ to $f_* x$ and makes the diagram
\[\xymatrix{
\mb{X} \X_\mb{B} \mc{O}_\mb{B} \ar[d]_t \ar[r]^{\hspace{15pt} (-)_*} & \mb{X} \ar[dl] \\
\mb{B}
}\]
commute.
\end{prop}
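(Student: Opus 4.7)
The plan is to realize $(-)_*$ as a section of a naturally occurring trivial Kan fibration. Let $\Fun^c(\D^1, \mb{X}) \inc \Fun(\D^1, \mb{X})$ denote the full subcategory spanned by those edges of $\mb{X}$ which are $p$-cocartesian. Combining evaluation at $0$ with postcomposition by $p$ on the arrow coordinate produces a canonical map
\[\Phi : \Fun^c(\D^1, \mb{X}) \to \mb{X} \X_\mb{B} \mc{O}_\mb{B}.\]
I claim that $\Phi$ is a trivial Kan fibration. Granting this, we choose any section $\sigma$ and define $(-)_* := e_1 \circ \sigma$, where $e_1 : \Fun^c(\D^1, \mb{X}) \to \mb{X}$ is evaluation at $1$. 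The required commutativity is automatic: on an $n$-simplex classifying $(\alpha : \D^n \to \mb{X}, \beta : \D^n \X \D^1 \to \mb{B})$, the composite $p \circ e_1 \circ \sigma$ reads off $\beta|_{\D^n \X \{1\}} = t \circ \beta$, which is precisely the projection to $\mc{O}_\mb{B}$ followed by $t$. In particular, on an object $(x,f)$ we obtain $e_1 \sigma(x,f)$, a terminal vertex of a cocartesian lift of $f$ starting at $x$, which represents $f_* x$.

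To check that $\Phi$ is a trivial Kan fibration, one must solve every lifting problem against a boundary inclusion $\del \D^n \inj \D^n$. Unwinding, such a problem supplies a map $\D^n \X \D^1 \to \mb{B}$ together with a compatible partial lift
\[((\D^n)^\flat \X \{0\}) \coprod_{((\del \D^n)^\flat \X \{0\})} ((\del \D^n)^\flat \X (\D^1)^\sharp) \to \mb{X}^\natural\]
of marked simplicial sets over $\mb{B}^\sharp$, and asks for an extension to $(\D^n)^\flat \X (\D^1)^\sharp \to \mb{X}^\natural$. The domain of the inclusion is the pushout-product of the cofibration $(\del \D^n)^\flat \inj (\D^n)^\flat$ with $\{0\} \inj (\D^1)^\sharp$; the latter is opposite marked anodyne by the same reasoning employed in the proof of Proposition \ref{laxmon}. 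The opposite of \cite[Proposition 3.1.2.3]{HTT} therefore ensures that the pushout-product is opposite marked anodyne, and since $\mb{X}^\natural \to \mb{B}^\sharp$ is a fibrant object of the cocartesian model structure, the required lift exists.

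The main technical obstacle is the bookkeeping that turns the abstract lifting problem for $\Phi$ into the marked-simplicial-set lifting problem above, together with the verification that the marking on $(\D^n)^\flat \X (\D^1)^\sharp$ really forces only the edges $\{i\} \X \D^1$ to be $p$-cocartesian (the correct definition of a simplex of $\Fun^c(\D^1, \mb{X})$). Once this translation is in place, the result follows formally from the pushout-product axiom for opposite marked anodyne maps and the definition of a cocartesian fibration.
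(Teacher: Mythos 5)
Your proposal is correct and is essentially the paper's own argument: your $\Fun^c(\D^1,\mb{X})$ is the paper's $\mc{O}^c_{\mb{X}}$, your $\Phi$ is its $(s,p)$, and the trivial-Kan-fibration claim is verified by the identical translation into the marked lifting problem against the pushout-product of $(\del\D^n)^\flat \inj (\D^n)^\flat$ with $\{0\} \inj (\D^1)^\sharp$, resolved by (the opposite of) \cite[Proposition 3.1.2.3]{HTT}. Defining $(-)_*$ as evaluation at $1$ composed with a chosen section is exactly how the paper concludes, so there is nothing to add.
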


\begin{proof}
Let $\mc{O}_\mb{X}^c$ be the full subcategory of $\mc{O}_\mb{X}$ spanned by the cocartesian arrows and let $p : \mc{O}_\mb{X}^c \inj \mc{O}_\mb{B}$ be the projection. Then the essential point is that
\[(s, p) : \mc{O}_\mb{X}^c \to \mb{X} \X_\mb{B} \mc{O}_\mb{B}\]
is a trivial Kan fibration. Indeed, this follows from arguments made in \cite[\S 3.1.2]{HTT}, which we recall here for completeness: the square
\[\xymatrix{
\partial \D^n \ar[r] \ar[d] & \mc{O}_\mb{X}^c \ar[d] \\
\D^n \ar[r] & \mb{X} \X_\mb{B} \mc{O}_\mb{B}
}\]
determines the same lifting problem as the square
\[\xymatrix{
[(\del \D^n)^\flat \X (\D^1)^\sharp] \coprod_{(\del \D^n)^\flat \X \{0\}} [(\D^n)^\flat \X \{0\}] \ar[r] \ar[d] & \mb{X}^\natural \ar[d] \\
(\D^n)^\flat \X (\D^1)^\sharp \ar[r] &  \mb{B}^\sharp }\]
of marked simplicial sets. But the left vertical map is marked anodyne \cite[3.1.2.3]{HTT}, so a lift exists.

Now the diagram
\[\xymatrix{
\mc{O}^c_\mb{X} \ar[rrd]^s \ar[rdd]_{t \circ p} \ar[rd] \\
& \mb{X} \X_\mb{B} \mc{O}_\mb{B} \ar[d]_t & \mb{X} \ar[dl] \\
& \mb{B}}\]
commutes, so composing $s$ with any section of $(s, p)$ gives the desired map.
\end{proof}

We'll also need to import a result from a recent paper with Clark Barwick and Denis Nardin \cite{BGN}:
\begin{thm}
Let $p : \mb{X} \to \mb{B}$ be a cocartesian fibration of $\iy$-categories classified by a functor $F: \mb{B} \to \Cat_\iy$. Let $\op: \Cat_\iy \to \Cat_\iy$ be the functor that takes a category to its opposite. Then there is a cocartesian fibration $p' : \mb{X}' \to \mb{B}$ classified by $\op \circ F$ together with a functor $\Map_\mb{B}(-, -) : \mb{X}' \X_\mb{B} \mb{X} \to \Top$ such that for each $b \in \mb{B}$, the diagram
\[\begin{tikzcd}
\mb{X}' \X_{\mb{B}} \{b\} \X_{\mb{B}} \mb{X} \ar{r}[inner sep=5pt]{\Map_\mb{B}(-,-)} \ar{d}{\sim} & \Top \\
\mb{X}_b^\op \X \mb{X}_b \ar{ru}[swap]{\Map(-, -)}
\end{tikzcd}\]
homotopy commutes, where the vertical equivalence comes from the given identification of $\mb{X}'_b$ with $\mb{X}_b^\op$. Moreover, using these identifications, for each morphism $f: b \to b' \in \mb{B}$ and for each $(y, x) \in \mb{X}_b^\op \X \mb{X}_b$, $\xi$ sends (up to equivalence) the cocartesian edge from $(y, x)$ to $(F(f)(y), F(f)(x))$ to the natural map $\Map(y, x) \to \Map(F(f)(y), F(f)(x))$ induced by $F(f)$.
\end{thm}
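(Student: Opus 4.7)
The approach combines the straightening--unstraightening equivalence with a universal twisted arrow construction. For the first part of the theorem (existence of $\mb{X}'$), I would simply define $p': \mb{X}' \to \mb{B}$ to be the unstraightening of $\op \circ F : \mb{B} \to \Cat_\iy$; this immediately yields a cocartesian fibration classified by $\op \circ F$, with fibers $\mb{X}'_b \simeq \mb{X}_b^\op$.

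For $\Map_\mb{B}$, my plan is first to solve a universal problem over $\Cat_\iy$ and then pull back along $F$. Let $\mc{U} \to \Cat_\iy$ denote the universal cocartesian fibration (whose fiber over $\mc{C}$ is $\mc{C}$), and let $\mc{U}' \to \Cat_\iy$ be the unstraightening of $\op$. I want to construct a universal mapping space functor
\[\Map_{\Cat_\iy} : \mc{U}' \X_{\Cat_\iy} \mc{U} \to \Top\]
whose fiber over $\mc{C}$ is the classical mapping space $\mc{C}^\op \X \mc{C} \to \Top$, and which sends cocartesian edges over a functor $G : \mc{C} \to \mc{D}$ to the maps induced by $G$ on mapping spaces. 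The main tool would be the twisted arrow $\iy$-category: for each $\mc{C}$ there is a left fibration $\text{Tw}(\mc{C}) \to \mc{C}^\op \X \mc{C}$ classifying $\Map_\mc{C}$. I would promote $\text{Tw}$ to a functor $\Cat_\iy \to \Cat_\iy$ equipped with a natural transformation to $(-)^\op \X (-)$ that is pointwise a left fibration, unstraighten this transformation over $\Cat_\iy$, and then straighten the resulting fiberwise left fibration to obtain $\Map_{\Cat_\iy}$.

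The theorem then follows by pullback along $F$: using the equivalence $\mb{X}' \X_\mb{B} \mb{X} \simeq F^*(\mc{U}' \X_{\Cat_\iy} \mc{U})$, I would define $\Map_\mb{B}$ as the composition with $\Map_{\Cat_\iy}$. Both the fiberwise identification of $\Map_\mb{B}$ with the classical $\Map_{\mb{X}_b}$ and the description of the action on cocartesian edges (as the natural map $\Map(y, x) \to \Map(F(f)(y), F(f)(x))$ induced by $F(f)$) are built into the universal construction and survive pullback automatically.

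The principal obstacle is the coherent functoriality of the twisted arrow construction on $\Cat_\iy$: lifting $\mc{C} \mapsto \text{Tw}(\mc{C})$ from an operation on simplicial sets to a genuine functor of $\iy$-categories, equipped with a coherent natural transformation to $(-)^\op \X (-)$, requires either a careful combinatorial argument with marked simplicial sets (in the spirit of \cite[\S 3.1.2]{HTT}) or an argument using the universal property of $\Cat_\iy$ directly. This is the substance of the companion work \cite{BGN}, to which I would defer the detailed execution.
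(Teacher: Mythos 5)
Your proposal is correct in outline but takes a genuinely different route from the paper. The paper's proof is a two-line citation: it sets $\mb{X}' = (\mb{X}^\vee)^\op$, where $\mb{X}^\vee$ is the explicit dual fibration constructed combinatorially in \cite{BGN}, and takes $\Map_\mb{B}(-,-)$ to be classified by the relative twisted arrow category $\td{\mc{O}}(\mb{X}/\mb{B}) \to \mb{X}' \X_\mb{B} \mb{X}$, which \cite[\S 5]{BGN} shows is a left fibration by a direct marked-anodyne argument over $\mb{B}$ itself --- no straightening is invoked. You instead work universally over $\Cat_\iy$: define $\mb{X}'$ by unstraightening $\op \circ F$, build a universal $\Map_{\Cat_\iy}$ on $\mc{U}' \X_{\Cat_\iy} \mc{U}$ from a functorial twisted arrow construction, and pull back along $F$. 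This is a legitimate alternative and is conceptually cleaner (the fiberwise identification and base-change compatibility come for free from the universal property), at the cost of running everything through the straightening equivalence, which makes the explicit identification of $\Map_\mb{B}$ on objects and edges less direct. One step in your sketch is thinner than you present it: after unstraightening the natural transformation $\mathrm{Tw} \Rightarrow (-)^\op \X (-)$ you obtain a map over $\Cat_\iy$ that is only \emph{fiberwise} a left fibration a priori; to straighten it to a functor $\Map_{\Cat_\iy}$ valued in $\Top$ you must verify that the total map $\int \mathrm{Tw} \to \mc{U}' \X_{\Cat_\iy} \mc{U}$ is a left fibration globally, which amounts to checking how $\mathrm{Tw}$ interacts with cocartesian edges. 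That verification is precisely the content of the ``moreover'' clause of the theorem, so it cannot be described as surviving pullback ``automatically''; it is the same work that \cite{BGN} does over $\mb{B}$, just relocated to the universal case. Since both you and the paper ultimately defer the substance to \cite{BGN}, the proposal stands, but you should be explicit that the global left-fibration check is where the cocartesian-edge statement is actually proved.
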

\begin{proof}
Using the notation of \cite{BGN}, take $\mb{X}' = (\mb{X}^\vee)^\op$ and let $\Map_\mb{B}(-, -)$ be a functor classified by the left fibration $M : \td{\mc{O}}(\mb{X}/\mb{B}) \to \mb{X}' \X_\mb{B} \mb{X}$ of \cite[\S 5]{BGN}.
\end{proof}

Observe that Lurie's category $\Gamma^\X$ \cite[Notation 2.4.1.2]{HA} is the full subcategory of $\mc{O}_{\F_*}$ spanned by the inert morphisms. Thus by Theorem \ref{(-)_*} applied to $\mb{C}^\ot \X_{\F_*} (\mb{C}^\op)^\ot$, we get a functor
\[\phi_0' : \mb{C}^\ot \X_{\F_*} (\mb{C}^\op)^\ot \X_{\F_*} \Ga^\X \to \mb{C}^\ot \X_{\F_*} (\mb{C}^\op)^\ot.\]
Composing this with $\Map_{\F_*}(-, -) : \mb{C}^\ot \X_{\F_*} (\mb{C}^\op)^\ot \to \Top$ gives a functor
\[\phi_0 : \mb{C}^\ot \X_{\F_*} (\mb{C}^\op)^\ot \X_{\F_*} \Ga^\X \to \Top\]
which adjoints over to a functor
\[\phi : \mb{C}^\ot \X_{\F_*} (\mb{C}^\op)^\ot \to \td{\Top}^\X  \tag*{\cite[Construction 2.4.1.4]{HA}}\]
which factors through $\Top^\X$, by the properties of $(-)_*$ and $\Map_{\F_*}(-, -)$.

Adjointing again gives a functor
\[\psi : \mb{C}^\ot \to \ol{\Fun(\mb{C}^\op, \Top)^\ot}\]
We must check that $\psi$ factors through $\Fun(\mb{C}^\op, \Top)^\ot$. This is equivalent to the claim that for any morphism $f \in \F$, the pullback
\[\phi_f : \mb{C}^\ot_f \X_{\D^1} (\mb{C}^\op)^\ot_f \to \Top^\X_f\]
is compatible with the product decompositions of each side, which is readily verified.
We denote the ensuing functor
\[Y^\ot : \mb{C}^\ot \to \Fun(\mb{C}^\op, \Top)^\ot\]
and the final order of duty is to prove that $Y^\ot$ is symmetric monoidal. Let $\xi: (Z_s)_{s \in S^o} \to (W_t)_{t \in T^o}$ be a cocartesian morphism of $\mb{C}^\ot$ lying over the morphism $f : S \to T$ of $\F_*$, so that 
\[W_i \simeq \bigotimes_{j \in f^{-1}(i)} Z_j,\]
and let
\[Y_\xi : (\mb{C}^\op)^\ot_f \to \Top^\X_f\]
be the induced morphism. For each $(X_t)_{t \in T^o} \in (\mb{C}^\op)^\ot_T$, let

\[K = (\mb{C}^\op)^\ot_{/ (X_t)_{t \in T^o}} \X_{(\F_*)_{/T}} \{f\}\]
and let $\rho: K^\rhd \to (\mb{C}^\op)^\ot_f$ be the natural inclusion, taking the cone point to $(X_t)_{t \in T^o}$. Then the condition we must verify is that
\[Y_\xi \circ \rho : K^\rhd \to \Top^\X_f\]
is a colimit diagram relative to $\D^1$, which is to say that the natural map
\[\colim{((P_s)_{s \in S^o} \to (X_t)_{t \in T^o}) \in K} \, \, f_*((\Map(P_s, Z_s))_{s \in S^o}) \to (\Map(X_t, W_t))_{t \in T^o}\]
is an equivalence. To prove this, we may as well take $T = \angs{1}$ and $f$ to be active. Then we're really asking for the natural map
\[\colim{X \to P_1 \ot P_2 \ot \cdots \ot P_n}\prod_{i = 1}^n \Map(P_i, Z_i) \to \Map(X, \bigotimes_{i = 1}^n Z_i)\]
to be an equivalence for all $n\geq 0$,  $X \in \mb{C}^\op$ and $(Z_i)_{1 \leq i \leq n} \in \mb{C}^n$. 

Define a category $\mb{D}_n$ by the pullback square
\[\begin{tikzcd}
\mb{D}_n \ar{d} \ar{r} & (\mb{C}^n)_{/(Z_1, \cdots, Z_n)} \ar{d}{\mu_n \circ s} \\
\mb{C}_{X/} \ar{r}{t} & \mb{C}.
\end{tikzcd}\]
Then the functor on $\mb{C}^n \X_{\mb{C}} \mb{C}_{X/}$ taking $X \to P_1 \ot \cdots \ot P_n$ to $\prod_{i = 1}^n \Map(P_i, Z_i)$ is left Kan extended from the constant functor $\mb{D}_n \to \Top$ with image $*$, so we must show the ensuing map of spaces
\[N(\mb{D}_n) \to \Map(X, \bigotimes_{i = 1}^n Z_i)\]
is a weak equivalence. But replacing $(\mb{C}^n)_{/(Z_1, \cdots, Z_n)}$ with its final object \newline $\id_{(Z_1, \cdots, Z_n)}$ doesn't alter the homotopy type of the pullback, and the resulting pullback is $\Map(X, \bigotimes_{i = 1}^n Z_i)$ by definition. Unwinding the sequence of morphisms shows that this is the equivalence desired.

\RaggedRight
\bibliographystyle{alpha}
\bibliography{mybib}

\end{document}